\newtheorem{thm}{Theorem}
\newtheorem{lem}{Lemma}
\newtheorem{defn}{Definition}
\newtheorem{rem}{Remark}
\newtheorem{example}{Example}
\DeclareMathOperator{\Span}{span}
\DeclareMathOperator{\Rank}{rank}
\newtheorem{alg}{Algorithm}
\newcommand{\zcord}{\zeta}
\title{\LARGE \bf
A Normal Form for Two-Input Flat Nonlinear Discrete-Time Systems
}
\author{Johannes Diwold, Bernd Kolar and Markus Sch{\"o}berl
\thanks{The first author and the second author have been supported by the Austrian Science Fund (FWF) under grant number P~32151 and P~29964. All authors are with the Institute of Automatic Control and Control Systems Technology, Johannes Kepler University Linz, Altenbergerstrasse 66, 4040 Linz, Austria.\newline {Email: \tt\small johannes.diwold@jku.at,\newline bernd.kolar@jku.at, markus.schoeberl@jku.at}}
}
\begin{document}

\maketitle
\thispagestyle{empty}
\pagestyle{empty}

\begin{abstract}

We show that every flat nonlinear discrete-time system with two inputs can be transformed into a structurally flat normal form by state- and input transformations. This normal form has a triangular structure and allows to read off the flat output, as well as a systematic construction of the parameterization of all system variables by the flat output and its forward-shifts. For flat continuous-time systems no comparable normal form exist.
\end{abstract}

	



\setlength{\arraycolsep}{2pt} 

\section{Introduction}

In the 1990s, the concept of flatness has been introduced by Fliess,
L\'{e}vine, Martin and Rouchon for nonlinear continuous-time systems
(see e.g. \cite{FliessLevineMartinRouchon:1992,FliessLevineMartinRouchon:1995}
and \cite{FliessLevineMartinRouchon:1999}). Flat continuous-time
systems have the characteristic feature that all system variables
can be parametrized by a flat output and its time derivatives. They
form an extension of the class of static feedback linearizable systems
and can be linearized by endogenous dynamic feedback. Their popularity
stems from the fact that a lot of physical systems possess the property
of flatness and that the knowledge of a flat output allows an elegant
solution to motion planning problems and a systematic design of tracking
controllers.

For nonlinear discrete-time systems, flatness can be defined analogously
to the continuous-time case. The main difference is that time derivatives
have to be replaced by forward-shifts. Like in the continuous-time
case, flat systems form an extension of static feedback linearizable
systems. The problem of static feedback linearization for discrete-time
systems is already solved, see \cite{Grizzle:1986}, \cite{Jakubczyk:1987}
and \cite{Aranda-BricaireKottaMoog:1996}. An important difference
to the continuous-time case is the existence of discrete-time systems
that can be linearized by exogenous dynamic feedback only. This fact
has been pointed out first in \cite{Aranda-BricaireMoog:2008}. The
corresponding linearizing output contains not only forward-shifts
but also backward-shifts of system variables. This gives rise to the
question whether the definition of discrete-time flatness should be
extended to both forward- and backward-shifts, as proposed in \cite{GuillotMillerioux2020}.
However, within this contribution, we follow \cite{KaldmaeKotta:2013},
\cite{KolarKaldmaeSchoberlKottaSchlacher:2016} and define flatness
in such a manner that it corresponds to the endogenous dynamic feedback
linearization problem. Therefore, we only consider forward-shifts
in the flat output. In order to avoid any confusion, we also use the
term endogenous flatness.

In general, the analysis of flat systems can be divided into two separate
tasks. First, we are interested in checking whether a system is flat
or not in order to clarify if flatness based control strategies can
be applied in principle. In \cite{Grizzle:1986} and \cite{NijmeijervanderSchaft:1990},
an efficient test for static feedback linearizable systems, which
is based on the computation of certain distributions, can be found.
As we have shown in \cite{KolarDiwoldSchoberl:2019}, this test can
be generalized to systems that possess the property of endogenous
flatness. The test is based on the results of \cite{KolarSchoberlDiwold:2019}.
It should be noted that for continuous-time flat systems no comparable
test is available so far. Second, in order to use the flatness property
for control strategies, the knowledge of a flat output as well as
the corresponding parameterization of all system variables is necessary.
For this purpose, the use of structurally flat normal forms (see e.g.
\cite{SchoberlSchlacher:2014}, \cite{Schoberl:2014} and \cite{KolarSchoberlSchlacher:2016-2})
has turned out to be helpful. Structurally flat normal forms allow
to read off the flat output, as well as a systematic construction
of the parameterization of all system variables. The most famous example
for such a normal form is the Brunovsky normal form. However, a transformation
to Brunovsky normal form is possible if and only if the system is
static feedback linearizable. In \cite{KolarDiwoldSchoberl:2019},
we have shown that every system that possesses the property of endogenous
flatness can be transformed into a structurally flat implicit normal
form. The main feature of this normal form is that the equations depend
on the system variables in a triangular manner. The reason for the
implicit character of this normal form is that the required coordinate
transformations are possibly more general than the usual state- and
input transformations. In the present contribution, we show that systems
with two inputs are an exception and can be transformed into a structurally
flat normal form by using state- and input transformations only. Thus,
the state representation is preserved. For this reason, we also use
the term explicit triangular normal form. We want to emphasize that
for flat continuous-time systems no comparable normal form exist.

The paper is organized as follows: In Section \ref{sec:Flatness-of-Time}
we recapitulate the concept of endogenous flatness and the corresponding
test according to \cite{KolarDiwoldSchoberl:2019}. In Section \ref{sec:useful_stuff}
we discuss certain coordinate transformations, which will be useful
later on. In Section \ref{sec:Explicit-Triangular-Form} we introduce
a structurally flat explicit triangular form. Then, we prove that
every two-input discrete-time system that possesses the property of
endogenous flatness can be transformed into such a representation
by successive state- and input transformations. Finally, in Section
\ref{sec:Example}, we illustrate our results by an example.

\section{Flatness of Discrete-Time Systems\label{sec:Flatness-of-Time}}

Throughout this contribution we consider discrete-time nonlinear systems
in explicit state representation of the form

\begin{equation}
x^{i,+}=f^{i}(x,u)\,,\quad i=1,\dots,n\label{eq:sysEq-1}
\end{equation}
with $\dim(x)=n$, $\dim(u)=m$ and smooth functions $f^{i}(x,u)$.
Geometrically, the system \eqref{eq:sysEq-1} can be interpreted as
a map $f$ from a manifold $\mathcal{X}\times\mathcal{U}$ with coordinates
$(x,u)$ to a manifold $\mathcal{X}^{+}$ with coordinates $x^{+}$.
Furthermore, we assume that the system meets $\Rank(\partial_{(x,u)}f)=n$,
which is a necessary condition for accessibility and consequently
also for flatness. Apart from this, we assume that the system possesses
no redundant inputs, i.e. $\Rank(\partial_{u}f)=m$, and define endogenous
flatness according to \cite{KolarDiwoldSchoberl:2019}.
\begin{defn}
\label{def:endflat}A system \eqref{eq:sysEq-1} possesses the property
of endogenous\footnote{In the rest of the paper we neglect the supplement ``endogenous''
and use only the term ``flat'' for such a system.} flatness around an equilibrium $(x_{0},u_{0})$, if there exists
an $m$-tuple of functions
\begin{equation}
y^{j}=\varphi^{j}(x,u,u_{[1]},u_{[2]}\dots,u_{[q]})\,,\hphantom{aa}j=1,\dots,m\,,\label{eq:flatOut}
\end{equation}
where $u_{[\alpha]}$ denotes the $\alpha$-th forward-shift of $u$,
such that the $n+m$ coordinate functions $x$ and $u$ can be expressed
locally by $y$ and forward-shifts of $y$ up to some finite order,
i.e.\footnote{We use the same notation as in \cite{KolarSchoberlDiwold:2019} and
\cite{KolarDiwoldSchoberl:2019} with a multi-index $R=(r_{1},\dots,r_{m})$.}
\[
\begin{aligned}x^{i} & =F_{x}^{i}(y,y_{[1]},y_{[2]},\dots,y_{[R-1]})\,,\hphantom{aa}i=1,\dots,n\\
u^{j} & =F_{u}^{j}(y,y_{[1]},y_{[2]},\dots,y_{[R]})\,,\hphantom{aaai}j=1,\dots,m\,.
\end{aligned}
\]
The $m$-tuple \eqref{eq:flatOut} is called a flat output.
\end{defn}

The test for flatness, as stated in \cite{KolarDiwoldSchoberl:2019},
is based on the construction of sequences of nested distributions
on $\mathcal{X}\times\mathcal{U}$ and $\mathcal{X}^{+}$. The construction
makes use of the system equations \eqref{eq:sysEq-1} and the map
\[
\pi:\mathcal{X}\times\mathcal{U}\rightarrow\mathcal{X}^{+}
\]
defined by
\[
x^{i,+}=x^{i}\,,\hphantom{aa}i=1,\dots,n\,.
\]
\begin{alg}\label{def:D_Delta_Def} \ \\
\textbf{Step $k=0$}: 
Define the distribution 
\begin{align}
\Delta_{0}=0
\notag
\end{align}
on $\mathcal{X}^{+}$ and
\begin{align*}
E_{0}=\pi_{*}^{-1}(\Delta_{0})=\Span\{\partial_{u}\}
\end{align*}
on $\mathcal{X}\times\mathcal{U}$. Then compute the largest subdistribution
\begin{align}
D_{0}\subset E_{0}
\label{eq:D0_def}
\end{align}
which is projectable\footnote{By projectable we mean that the pushforward $f_{*}(D_{0})$ is a well-defined distribution on $\mathcal{X}^+$ (see \cite{KolarDiwoldSchoberl:2019}).} with respect to the map $f$ of \eqref{eq:sysEq-1}. The distribution $D_0$ is involutive\footnote{The largest projectable subdistribution $D$ (with resp. to $f$) of a distribution $E$ is uniquely determined. Furthermore, if $E$ is involutive, then $D$ is also involutive (see \cite{KolarDiwoldSchoberl:2019}).} and its pushforward 
\begin{align}
\Delta_{1}=f_{*}(D_{0})
\notag
\end{align}
is a well-defined involutive distribution on $\mathcal{X}^+$.\\
\textbf{Step $k\geq1$}: 
Compute
\begin{align}
E_{k}=\pi_{*}^{-1}(\Delta_{k})
\label{eq:Def_Ek}
\end{align}
and the largest subdistribution 
\begin{align*}
D_{k}\subset E_{k}
\end{align*}
which is projectable with respect to the map $f$ of \eqref{eq:sysEq-1}. The distribution $D_k$ is involutive and its pushforward
\begin{align*}
\Delta_{k+1}=f_{*}(D_{k})
\end{align*}
is a well-defined involutive distribution on $\mathcal{X}^+$.\\
\textbf{Stop} if for some $k=\bar{k}$, 
\begin{align}
\dim(\text{\ensuremath{\Delta_{\bar{k}+1})=\dim(\Delta_{\bar{k}})}}\,.
\notag
\end{align}
\end{alg}

The procedure according to Algorithm \ref{def:D_Delta_Def} yields
a unique nested sequence of projectable and involutive distributions
\begin{equation}
D_{0}\subset D_{1}\subset\ldots\subset D_{\bar{k}-1}\label{eq:D_seq}
\end{equation}
 on $\mathcal{X}\times\mathcal{U}$ and a unique nested sequence of
involutive distributions
\begin{equation}
\Delta_{1}\subset\Delta_{2}\subset\ldots\subset\Delta_{\bar{k}}\label{eq:Delta_seq}
\end{equation}
 on $\mathcal{X}^{+}$ so that
\begin{equation}
f_{*}(D_{k})=\Delta_{k+1}\,,\quad k=0,\dots,\bar{k}-1\,.\label{eq:push_forward_Dk}
\end{equation}
Whether a system is flat or not can now be checked by the use of the
following theorem.
\begin{thm}
\label{thm:A-system-endog}A system \eqref{eq:sysEq-1} with $\Rank(\partial_{u}f)=m$
is flat if and only if $\dim(\Delta_{\bar{k}})=n$.
\end{thm}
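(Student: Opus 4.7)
The proof splits naturally into the two implications of the equivalence, with the sufficiency direction carrying the bulk of the work.

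\textbf{Necessity ($\text{flat} \Rightarrow \dim(\Delta_{\bar{k}})=n$).} Assuming a flat output \eqref{eq:flatOut} exists, my plan is to augment $(x,u)$ by sufficiently many forward-shifts $u_{[\alpha]}$ and reparameterize via $F_x,F_u$ so that in the new coordinates (built from $y$ and its shifts, together with any extra $u_{[\alpha]}$ needed to complete a chart) the prolonged dynamics act as a pure index-shift $y_{[\alpha]}\mapsto y_{[\alpha+1]}$. In these coordinates $E_0=\Span\{\partial_u\}$ corresponds to the top-order shift directions of $y$, and checking projectability against an index-shift is routine. One then shows inductively that $\Delta_k$, rewritten through $F_x$, contains $\partial_{y_{[0]}},\ldots,\partial_{y_{[k-1]}}$ expressed in $x^+$-coordinates. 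Since $F_x$ parameterizes the full $x^+$ by these quantities, after finitely many iterations $\Delta_{\bar k}$ saturates to the full tangent bundle, giving $\dim(\Delta_{\bar k})=n$.

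\textbf{Sufficiency ($\dim(\Delta_{\bar{k}})=n \Rightarrow \text{flat}$).} Here a flat output must be constructed explicitly. I would work backwards through the nested chain \eqref{eq:D_seq}--\eqref{eq:Delta_seq}. At the last level, since $\Delta_{\bar k}=T\mathcal{X}^+$ while $\Delta_{\bar k-1}\subsetneq\Delta_{\bar k}$ is involutive, the Frobenius theorem yields functions on $\mathcal{X}^+$ transverse to $\Delta_{\bar k-1}$. Pulling these through $\pi$ and through $f$, and repeating the argument at each $\Delta_k$, one assembles a pool of candidate functions of $(x,u,u_{[1]},\ldots)$. The flat output is selected as $m$ functions corresponding to a complement of $D_k$ in $E_k$ at each stage, i.e.\ exactly the input-like directions that projectability forced one to discard. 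The push-forward relation \eqref{eq:push_forward_Dk} then allows $x$ and $u$ to be recovered from $y$ and its forward-shifts, which verifies Definition~\ref{def:endflat}.

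\textbf{Main obstacle.} The principal technical difficulty lies in ensuring the constructed candidate is an $m$-tuple depending on $(x,u)$ and \emph{forward}-shifts of $u$ only (no backward-shifts), and that the parameterizations $F_x^i, F_u^j$ are well-defined and single-valued. Dimension counting along the chain, together with the rank hypotheses $\Rank(\partial_{(x,u)}f)=n$ and $\Rank(\partial_u f)=m$, is expected to yield $\dim E_k-\dim D_k$ summing to $m$ across the steps, which is the right count. The subtle point is arranging the successive Frobenius charts coherently, so that they glue into one parameterization by a single $m$-tuple $y$ and its forward-shifts, rather than into data that would require inversion of the dynamics $f$ and thus introduce backward-shifts.
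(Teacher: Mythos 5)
First, note that the paper does not prove Theorem \ref{thm:A-system-endog} at all: it explicitly defers to the cited reference, so there is no in-paper argument to compare yours against. Judged on its own, your sketch has a reasonable overall architecture (necessity by showing the $\Delta$-sequence grows from a flat parameterization, sufficiency by constructing a flat output from the nested distributions), but it contains one concrete error and leaves the genuinely hard step unresolved. The error is the identification of the flat output with ``a complement of $D_k$ in $E_k$ at each stage'' and the accompanying count $\sum_k(\dim E_k-\dim D_k)=m$. This fails already on Example \ref{exa:FlatCheck} of the paper: there $\dim E_1-\dim D_1=1$ and all other differences vanish, so the sum is $1$ while $m=2$; moreover the discarded direction $\partial_{x^1}$ does \emph{not} furnish a flat output component ($x^1$ is parameterized by the flat output $(x^4,x^5)$ and its shifts). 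The correct bookkeeping, visible in Section \ref{sec:Explicit-Triangular-Form}, is that the flat output consists of $\hat{x}_{\bar{k}}$, i.e. $\dim(\Delta_{\bar{k}})-\dim(\Delta_{\bar{k}-1})$ functions of $x$ transverse to $\Delta_{\bar{k}-1}$, together with the redundant inputs of the subsystems, which occur exactly where $\dim(z_{k-1})>\dim(\hat{x}_k)$, i.e. where $f_*$ has a nontrivial kernel on $D_{k-1}$ --- not where $D_k\neq E_k$.

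Second, what you label the ``main obstacle'' --- that the parameterization involves only forward shifts and that the successive Frobenius charts glue into a single $m$-tuple --- is not a technicality to be deferred; it is the actual mathematical content of the theorem, and your sketch offers no mechanism for it. The mechanism is the triangular structure \eqref{eq:Expicit_Triangular_I_form} (or its implicit analogue) together with the rank conditions \eqref{eq:Rankcond}: one solves the topmost block for $z_{\bar{k}-1}$ by the implicit function theorem and descends block by block, which is precisely what guarantees that only forward shifts appear; the push-forward relation \eqref{eq:push_forward_Dk} by itself does not ``allow $x$ and $u$ to be recovered.'' The necessity direction has a related gap: the algorithm computes the \emph{largest} projectable subdistribution $D_k\subset E_k$, so from a flat parameterization one can only exhibit \emph{some} projectable subdistribution and must then invoke monotonicity (a larger $D_k$ gives a larger $\Delta_{k+1}$, hence a larger $E_{k+1}$, hence a larger $D_{k+1}$) to conclude that the algorithm's sequence still saturates; your sketch instead treats the algorithm as if it directly returned the $y$-shift directions. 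As it stands, the proposal does not amount to a proof.
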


For the proof we refer to \cite{KolarDiwoldSchoberl:2019}. The test
for flatness contains the test for static feedback linearizability
(see \cite{NijmeijervanderSchaft:1990}) as a special case. The only
difference is that the distributions \eqref{eq:D_seq}, according
to Algorithm \ref{def:D_Delta_Def}, are defined as the largest projectable
subdistributions $D_{k}\subset E_{k}$, while in the static feedback
linearizable case these distributions coincide, i.e. $D_{k}=E_{k}$.
\begin{thm}
\label{thm:staticFeedb}A system \eqref{eq:sysEq-1} with $\Rank(\partial_{u}f)=m$
is static feedback linearizable if and only if $D_{k}=E_{k}$, $k\geq0$
and $\dim(\Delta_{\bar{k}})=n$.
\end{thm}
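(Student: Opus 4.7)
The plan is to reduce Theorem \ref{thm:staticFeedb} to the classical criterion for static feedback linearizability of discrete-time nonlinear systems, as stated in \cite{Grizzle:1986}, \cite{Jakubczyk:1987}, \cite{NijmeijervanderSchaft:1990}, and \cite{Aranda-BricaireKottaMoog:1996}. The key observation is that under the hypothesis $D_k = E_k$ for every $k$, the iteration in Algorithm \ref{def:D_Delta_Def} simplifies drastically: one just repeatedly computes $E_{k+1} = \pi_*^{-1}(f_*(E_k))$ starting from $E_0 = \Span\{\partial_u\}$, without ever having to cut down to a projectable subdistribution. This produces precisely the nested sequence of involutive distributions used in the classical test.

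For the ``only if'' direction I would invoke the assumed linearizing state and input transformations to bring the system into Brunovsky normal form. In these coordinates, the inputs enter through chains of integrators, and a direct calculation shows that $E_0 = \Span\{\partial_u\}$ is projectable with respect to $f$, so $D_0 = E_0$; by induction on $k$ the same holds for each $E_k$. Reading off the dimensions along the integrator chains then gives $\dim(\Delta_{\bar{k}}) = n$.

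For the ``if'' direction I would exploit the hypothesis $D_k = E_k$ to realize Algorithm \ref{def:D_Delta_Def} as the classical test, noting that $E_k$ is automatically involutive because the algorithm guarantees involutivity of every $D_k$. Combined with $\dim(\Delta_{\bar{k}}) = n$, the cited classical theorem furnishes state and input transformations converting the system to Brunovsky form, which is exactly static feedback linearizability.

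The main obstacle is bookkeeping: one must carefully translate between the projectability formalism of Algorithm \ref{def:D_Delta_Def} (distributions on $\mathcal{X} \times \mathcal{U}$ pushed forward via $f$ to $\mathcal{X}^+$ and then pulled back via $\pi$) and the classical formulation of the test (distributions constructed directly on $\mathcal{X} \times \mathcal{U}$ via successive pullbacks under $f$). Once this alignment is made explicit and the involutivity conditions are matched, Theorem \ref{thm:staticFeedb} becomes a restatement of the classical result in the distribution language used throughout this paper.
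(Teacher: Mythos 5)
Your proposal is correct and follows essentially the same route as the paper: the paper presents Theorem~\ref{thm:staticFeedb} precisely as the special case of the flatness test in which $D_k=E_k$ for all $k$, whereupon Algorithm~\ref{def:D_Delta_Def} collapses to the classical distribution-based test of \cite{NijmeijervanderSchaft:1990}, and no separate proof is given beyond this identification. Your additional observations (involutivity of $E_k$ inherited from $\Delta_k$, and verification of projectability in Brunovsky coordinates for the ``only if'' direction) are exactly the bookkeeping needed to make that identification precise.
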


Since all distributions $E_{k}$ must be completely projectable, the
test for static feedback linearizability is more restrictive. For
a system that meets Theorem \ref{thm:A-system-endog}, a single step
where $D_{k}\neq E_{k}$ can be interpreted as a defect in the test
of static feedback linearizability, as we will demonstrate by the
following example.

\begin{example}
\label{exa:FlatCheck}For the system

\begin{equation}
\begin{aligned}x^{5,+} & =x^{4}+x^{1}+x^{5}\\
x^{4,+} & =x^{1}(x^{4}+1)+x^{3}\\
x^{3,+} & =x^{1}+x^{2}\\
x^{2,+} & =u^{1}\\
x^{1,+} & =u^{2}
\end{aligned}
\label{eq:example_1}
\end{equation}
we obtain the sequence of distributions 
\begin{align*}
D_{0} & =\Span\{\partial_{u^{1}},\partial_{u^{2}}\}=E_{0}\\
D_{1} & =\Span\{\partial_{u^{1}},\partial_{u^{2}},\partial_{x^{2}}\}\subset E_{1}=\Span\{\partial_{u^{1}},\partial_{u^{2}},\partial_{x^{1}},\partial_{x^{2}}\}\\
D_{2} & =\Span\{\partial_{u^{1}},\partial_{u^{2}},\partial_{x^{2}},\partial_{x^{1}},\partial_{x^{3}}\}=E_{2}
\end{align*}
on $\mathcal{X}\times\mathcal{U}$ and 
\begin{align*}
\Delta_{1} & =\Span\{\partial_{x^{1,+}},\partial_{x^{2,+}}\}\\
\Delta_{2} & =\Span\{\partial_{x^{1,+}},\partial_{x^{2,+}},\partial_{x^{3,+}}\}\\
\Delta_{3} & =\Span\{\partial_{x^{1,+}},\partial_{x^{2,+}},\partial_{x^{3,+}},\partial_{x^{4,+}},\partial_{x^{5,+}}\}
\end{align*}
on $\mathcal{X}^{+}$. Despite the fact that $E_{1}$ is not completely
projectable, i.e. $D_{1}\neq E_{1}$, the distribution $\Delta_{3}$
meets $\dim(\Delta_{3})=n$ and the system possesses the weaker property
of flatness instead of static feedback linearizability. A flat output
is given by $y=(x^{4},x^{5})$.

Steps with $D_{k}\neq E_{k}$ may occur several times through the
algorithm. However, like in Example \ref{exa:FlatCheck}, for the
last distribution the relation $D_{\bar{k}-1}=E_{\bar{k}-1}$ holds.
Since we will use this relation in Section \ref{sec:Explicit-Triangular-Form},
we establish the following lemma.
\end{example}

\begin{lem}
\label{lem:EProj}For a flat system \eqref{eq:sysEq-1} the distribution
$E_{\bar{k}-1}$ is completely projectable, i.e. $D_{\bar{k}-1}=E_{\bar{k}-1}$.
\end{lem}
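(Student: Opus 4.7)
The plan is to give a short pointwise argument exploiting the observation that, for a flat system, the distribution $\Delta_{\bar{k}}$ must coincide with the entire tangent bundle of $\mathcal{X}^{+}$. First, by Theorem \ref{thm:A-system-endog}, flatness gives $\dim(\Delta_{\bar{k}}) = n$, and since $\mathcal{X}^{+}$ has dimension $n$ this means $\Delta_{\bar{k}}(p^{+}) = T_{p^{+}}\mathcal{X}^{+}$ at every $p^{+} \in \mathcal{X}^{+}$.

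Next, I would analyse the pointwise pushforward of $E_{\bar{k}-1}$. At each $p \in \mathcal{X} \times \mathcal{U}$ the differential $f_{*}: T_{p}(\mathcal{X}\times\mathcal{U}) \to T_{f(p)}\mathcal{X}^{+}$ is a well-defined linear surjection (by $\Rank(\partial_{(x,u)}f)=n$), and combining the inclusion $D_{\bar{k}-1} \subset E_{\bar{k}-1}$ with $f_{*}(D_{\bar{k}-1}) = \Delta_{\bar{k}}$ yields
\[
T_{f(p)}\mathcal{X}^{+} = \Delta_{\bar{k}}(f(p)) = f_{*}(D_{\bar{k}-1}(p)) \subset f_{*}(E_{\bar{k}-1}(p)) \subset T_{f(p)}\mathcal{X}^{+}.
\]
Consequently $f_{*}(E_{\bar{k}-1}(p)) = T_{f(p)}\mathcal{X}^{+}$ for every $p$. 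In particular the pointwise image is independent of the chosen preimage of $f(p)$ and is of constant rank $n$, so $f_{*}(E_{\bar{k}-1})$ is a well-defined distribution on $\mathcal{X}^{+}$, namely the full tangent bundle. By the projectability criterion of Algorithm \ref{def:D_Delta_Def}, this shows that $E_{\bar{k}-1}$ is itself projectable with respect to $f$.

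Since $D_{\bar{k}-1}$ is, by construction, the uniquely determined largest projectable subdistribution of $E_{\bar{k}-1}$, the projectability of $E_{\bar{k}-1}$ itself forces $D_{\bar{k}-1} = E_{\bar{k}-1}$. I do not foresee any real obstacle: the argument relies only on the definition of projectability used in the algorithm, and the key observation is simply that once $\Delta_{\bar{k}}$ saturates $T\mathcal{X}^{+}$, the pointwise $f_{*}$-image of any superdistribution of $D_{\bar{k}-1}$ must saturate it as well, so projectability comes for free.
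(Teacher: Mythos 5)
Your argument is correct and is essentially identical to the paper's own proof: both use $\dim(\Delta_{\bar{k}})=n$ together with $f_{*}(D_{\bar{k}-1})=\Delta_{\bar{k}}$ and $D_{\bar{k}-1}\subset E_{\bar{k}-1}$ to conclude that $f_{*}(E_{\bar{k}-1})$ is the full (hence well-defined) tangent distribution on $\mathcal{X}^{+}$, so $E_{\bar{k}-1}$ is projectable and must coincide with its largest projectable subdistribution $D_{\bar{k}-1}$. Your pointwise justification of why the image is independent of the choice of preimage is a slightly more explicit rendering of the same step the paper takes implicitly.
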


\begin{proof}
The pushforward of the last distribution $D_{\bar{k}-1}$ meets
\[
f_{*}(D_{\bar{k}-1})=\Delta_{\bar{k}}
\]
with
\[
\Delta_{\bar{k}}=\Span\{\partial_{x^{1,+}},\dots,\partial_{x^{n,+}}\}\,.
\]
Because of $D_{\bar{k}-1}\subset E_{\bar{k}-1}$ and $\dim(\Delta_{\bar{k}})=\dim(\mathcal{X}^{+})=n$,
we also get
\[
f_{*}(E_{\bar{k}-1})=\Span\{\partial_{x^{1,+}},\dots,\partial_{x^{n,+}}\}\,.
\]
Thus, $E_{\bar{k}-1}$ is projectable and according to the definition
of $D_{\bar{k}-1}$ we have $D_{\bar{k}-1}=E_{\bar{k}-1}$.
\end{proof}

\section{Coordinate Transformations\label{sec:useful_stuff}}

For static feedback linearizable systems, the sequences of distributions
\eqref{eq:D_seq} and \eqref{eq:Delta_seq} can be straightened out
simultaneously by suitable state transformations. As shown in \cite{NijmeijervanderSchaft:1990},
in such coordinates the system \eqref{eq:sysEq-1} exhibits a triangular
form. The primary objective of the present paper is to prove that
every flat system with two inputs allows a similar explicit\footnote{By the term ``explicit'' we refer to a state representation \eqref{eq:sysEq-1},
in order to distinguish it from the implicit triangular representation
discussed in \cite{KolarDiwoldSchoberl:2019}.} triangular representation. The key tool is again to straighten out
the sequences of distributions \eqref{eq:D_seq} and \eqref{eq:Delta_seq}
simultaneously (in Example \ref{exa:FlatCheck} we already have such
special coordinates). However, now we will need in general both state-
and input transformations. For systems with more than two inputs,
there is no guarantee that an explicit triangular representation exists
at all.

Before we state our main results in Section \ref{sec:Explicit-Triangular-Form},
we discuss certain state- and input transformations which will be
useful for straightening out the sequences of distributions. In order
to preserve an explicit system representation like \eqref{eq:sysEq-1},
within this contribution we restrict ourselves to state- and input
transformations
\begin{gather}
\begin{aligned}\hat{x}^{i} & =\Phi_{x}^{i}(x)\,,\hphantom{aaa}i=1,\dots,n\\
\hat{u}^{j} & =\Phi_{u}^{j}(x,u)\,,\hphantom{a}j=1,\dots,m\,,
\end{aligned}
\label{eq:expTransf}
\end{gather}
where both $x$ and $x^{+}$ are transformed equally. The transformed
system is given by
\begin{equation}
\hat{x}^{i,+}=\underset{f^{i}(\hat{x},\hat{u})}{\underbrace{\Phi_{x}^{i}(x^{+})\circ f(x,u)\circ\Phi^{-1}(\hat{x},\hat{u})}}\,,\hphantom{a}i=1,\dots,n
\end{equation}
where $\Phi^{-1}(\hat{x},\hat{u})$ denotes the inverse of \eqref{eq:expTransf}.
Like in the static feedback linearizable case, the first step in achieving
a triangular representation is to straighten out the sequence \eqref{eq:Delta_seq}.
Since \eqref{eq:Delta_seq} is a nested sequence of involutive distributions
on $\mathcal{X}^{+}$, by an extension of the Frobenius theorem there
exists a state transformation
\begin{equation}
(\bar{x}_{1},\dots,\bar{x}_{\bar{k}})=\Phi_{x}(x)\,,\label{eq:state_transf_delta}
\end{equation}
with $\dim(\bar{x}_{k})=\dim(\Delta_{k})-\dim(\Delta_{k-1})$, which
straightens out the distributions
\begin{gather}
\begin{aligned}\Delta_{1} & =\Span\{\partial_{\bar{x}_{1}^{+}}\}\\
\Delta_{2} & =\Span\{\partial_{\bar{x}_{1}^{+}},\partial_{\bar{x}_{2}^{+}}\}\\
 & \hphantom{a}\vdots\\
\Delta_{\bar{k}} & =\Span\{\partial_{\bar{x}_{1}^{+}},\partial_{\bar{x}_{2}^{+}},\dots,\partial_{\bar{x}_{\bar{k}}^{+}}\}\,
\end{aligned}
\label{eq:straight_Delta}
\end{gather}
simultaneously. The system in new coordinates reads as\footnote{Note that subsequently we will use the bar notation for system representations
where the $\Delta$-sequence is already straightened out.}
\begin{gather}
\begin{aligned}\bar{x}_{k}^{+} & =f_{k}(\bar{x},u)\,,\end{aligned}
\hphantom{a}k=1,\dots,\bar{k}\label{eq:sys_in_straight_Delta}
\end{gather}
and meets 
\begin{equation}
f_{*}(D_{k-1})=\Span\{\partial_{\bar{x}_{1}^{+}},\partial_{\bar{x}_{2}^{+}},\dots,\partial_{\bar{x}_{k}^{+}}\}
\end{equation}
for $k=1,\dots,\bar{k}$. Additionally, from the definition of $E_{k}$
according to \eqref{eq:Def_Ek} and \eqref{eq:straight_Delta}, it
follows automatically that $E_{k}$ is also straightened out and reads
as
\begin{equation}
E_{k}=\pi_{*}^{-1}(\Delta_{k})=\Span\{\partial_{\bar{x}_{k}},\dots,\partial_{\bar{x}_{1}},\partial_{u}\}\label{eq:straight_E}
\end{equation}
for $k=0,\dots,\bar{k}-1$. The $D$-sequence, in contrast, is only
straightened out automatically if the system possesses the stronger
property of static feedback linearizability, since then $D_{k}=E_{k}$.
Thus, for finding coordinates that straighten out both sequences
of distributions \eqref{eq:D_seq} and \eqref{eq:Delta_seq}, the
transformation \eqref{eq:state_transf_delta} alone is not sufficient.
Therefore, we need an additional transformation that straightens out
the $D$-sequence while the $\Delta$-sequence remains straightened
out. In the following, we introduce transformations that meet the
latter condition.
\begin{lem}
\label{lem:presFlat}State- and input transformations of the form

\begin{gather}
\begin{aligned}\hat{x}_{k} & =\Phi_{\bar{x},k}(\bar{x}_{k},\dots,\bar{x}_{\bar{k}})\,,\hphantom{a}k=1,\dots,\bar{k}\\
\hat{u} & =\Phi_{u}(\bar{x},u)
\end{aligned}
\label{eq:delta_preserv_state_transf}
\end{gather}
preserve the structure \eqref{eq:straight_Delta} of the sequence
of distributions \eqref{eq:Delta_seq}, i.e. 
\begin{gather*}
\begin{aligned}\Delta_{1} & =\Span\{\partial_{\hat{x}_{1}^{+}}\}\\
\Delta_{2} & =\Span\{\partial_{\hat{x}_{1}^{+}},\partial_{\hat{x}_{2}^{+}}\}\\
 & \hphantom{a}\vdots\\
\Delta_{\bar{k}} & =\Span\{\partial_{\hat{x}_{1}^{+}},\partial_{\hat{x}_{2}^{+}},\dots,\partial_{\hat{x}_{\bar{k}}^{+}}\}\,.
\end{aligned}
\end{gather*}
\end{lem}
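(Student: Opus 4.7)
The plan is to reduce the claim immediately to a statement about the state transformation alone. Each $\Delta_k$ is a distribution on the manifold $\mathcal{X}^+$, whose coordinates are the $x^+$, while the input transformation $\hat{u}=\Phi_u(\bar{x},u)$ modifies only the $u$-coordinates on $\mathcal{X}\times\mathcal{U}$ and therefore has no effect on $\mathcal{X}^+$ at all. Since $x$ and $x^+$ are transformed identically in \eqref{eq:delta_preserv_state_transf}, the question reduces to asking how the triangular map
\begin{equation*}
\hat{x}_k^+=\Phi_{\bar{x},k}(\bar{x}_k^+,\ldots,\bar{x}_{\bar{k}}^+),\quad k=1,\ldots,\bar{k},
\end{equation*}
on $\mathcal{X}^+$ changes the coordinate description of the flag \eqref{eq:straight_Delta}.

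Next, I would exploit the block upper-triangular shape of $\Phi_{\bar{x}}$. By the chain rule for coordinate vector fields,
\begin{equation*}
\partial_{\bar{x}_j^+}=\sum_{l=1}^{\bar{k}}\frac{\partial\hat{x}_l^+}{\partial\bar{x}_j^+}\,\partial_{\hat{x}_l^+},
\end{equation*}
and the structural dependence $\hat{x}_l^+=\Phi_{\bar{x},l}(\bar{x}_l^+,\ldots,\bar{x}_{\bar{k}}^+)$ forces $\partial\hat{x}_l^+/\partial\bar{x}_j^+=0$ whenever $l>j$. Consequently $\partial_{\bar{x}_j^+}\in\Span\{\partial_{\hat{x}_1^+},\ldots,\partial_{\hat{x}_j^+}\}$ for every $j$, which immediately yields the inclusion
\begin{equation*}
\Delta_k=\Span\{\partial_{\bar{x}_1^+},\ldots,\partial_{\bar{x}_k^+}\}\subset\Span\{\partial_{\hat{x}_1^+},\ldots,\partial_{\hat{x}_k^+}\}
\end{equation*}
for each $k$.

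To close the argument I would invoke a dimension count. Since $\Phi_{\bar{x}}$ is a diffeomorphism whose Jacobian is block upper-triangular, each diagonal block $\partial\Phi_{\bar{x},k}/\partial\bar{x}_k$ must be regular, so $\dim(\hat{x}_k)=\dim(\bar{x}_k)$ for all $k$. Both sides of the inclusion then have the same dimension and the asserted equalities $\Delta_k=\Span\{\partial_{\hat{x}_1^+},\ldots,\partial_{\hat{x}_k^+}\}$ follow for $k=1,\ldots,\bar{k}$.

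I do not expect a genuine obstacle here: once one observes that $\Phi_u$ is invisible to distributions on $\mathcal{X}^+$ and that $\Phi_{\bar{x}}$ has upper block-triangular Jacobian, the result collapses to the chain rule together with dimension matching. The only mild point requiring care is the bookkeeping when the blocks $\bar{x}_k$ have different sizes, which is notational rather than substantive.
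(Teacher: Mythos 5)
Your proof is correct and follows essentially the same route as the paper, which disposes of the lemma in one sentence by noting that the input transformation is invisible to distributions on $\mathcal{X}^{+}$ and that the block-triangular structure of $\Phi_{\bar{x}}$ does the rest; you have simply filled in the chain-rule computation and the dimension count that the authors leave implicit.
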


The proof follows from the triangular structure of the state transformation
of \eqref{eq:delta_preserv_state_transf}. The input transformation
does not affect the $\Delta$-sequence.

For systems with two inputs the distributions of the $D$-sequence
have a very special structure. Since we will deal with two-input systems
in Section \ref{subsec:Explicit-Triangular-Form}, we state the following
important lemma.
\begin{lem}
\label{lem:An-involutive-distribution}Consider an $n$-dimensional
manifold $\mathcal{Z}$ with coordinates $\zcord=(\zcord^{1},\dots,\zcord^{n})$
and an involutive distribution
\begin{equation}
D=\Span\{\partial_{\zcord^{1}},\dots,\partial_{\zcord^{k-1}},\partial_{\zcord^{i}}+\alpha(\zcord)\partial_{\zcord^{j}}\}\label{eq:D_form}
\end{equation}
for some $i,j\geq k$. There exists a transformation
\begin{equation}
\hat{\zcord}^{j}=\Phi^{j}(\zcord^{k},\dots,\zcord^{n})\label{eq:z_transf}
\end{equation}
of the coordinate $\zcord^{j}$ such that
\begin{equation}
D=\Span\{\partial_{\zcord^{1}},\dots,\partial_{\zcord^{k-1}},\partial_{\zcord^{i}}\}\,.\label{eq:straight_Z}
\end{equation}
\end{lem}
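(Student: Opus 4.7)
The plan is to exploit the involutivity of $D$ to pin down the coordinate-dependence of $\alpha$, and then to build $\Phi^j$ as a first integral of the last generator of $D$. Concretely, I will first compute
\[
[\partial_{\zcord^l},\,\partial_{\zcord^i}+\alpha(\zcord)\partial_{\zcord^j}]=(\partial_{\zcord^l}\alpha)\,\partial_{\zcord^j}
\]
for $l=1,\dots,k-1$. In the nontrivial case $i\neq j$ with $j\geq k$, the vector $\partial_{\zcord^j}$ is independent of the spanning set of $D$, so involutivity forces $\partial_{\zcord^l}\alpha=0$ for every $l<k$. Hence $\alpha=\alpha(\zcord^k,\dots,\zcord^n)$, a fact that is essential for the subsequent coordinate change to have the required form \eqref{eq:z_transf}.

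Next I will construct $\Phi^j$ as a local first integral of $v:=\partial_{\zcord^i}+\alpha\,\partial_{\zcord^j}$ that remains transversal to $\partial_{\zcord^j}$. That is, $\Phi^j$ solves the linear PDE
\[
\partial_{\zcord^i}\Phi^j+\alpha(\zcord^k,\dots,\zcord^n)\,\partial_{\zcord^j}\Phi^j=0,
\]
which I will handle by the method of characteristics: integrate $d\zcord^j/d\zcord^i=\alpha$ in the $(\zcord^i,\zcord^j)$-plane with all other $\zcord^l$ frozen as parameters, and take $\Phi^j$ to be the resulting constant of integration, viewed as a smooth function of $(\zcord^k,\dots,\zcord^n)$. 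Standard ODE theory provides such a $\Phi^j$ locally with $\partial_{\zcord^j}\Phi^j\neq 0$, so that $\hat{\zcord}^j=\Phi^j(\zcord^k,\dots,\zcord^n)$ together with $\hat{\zcord}^l=\zcord^l$ for $l\neq j$ is a local diffeomorphism.

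To finish, I will transport the spanning set of $D$ to the new coordinates. The chain rule yields
\[
\partial_{\zcord^i}+\alpha\,\partial_{\zcord^j}=\partial_{\hat{\zcord}^i}+\bigl(\partial_{\zcord^i}\Phi^j+\alpha\,\partial_{\zcord^j}\Phi^j\bigr)\partial_{\hat{\zcord}^j}=\partial_{\hat{\zcord}^i}
\]
by construction of $\Phi^j$, and since $\Phi^j$ is independent of $\zcord^1,\dots,\zcord^{k-1}$ the generators $\partial_{\zcord^l}$ coincide with $\partial_{\hat{\zcord}^l}$ for $l<k$, giving \eqref{eq:straight_Z}. The main obstacle is not the PDE itself but rather the first step: recognizing that $\alpha$ cannot depend on $\zcord^1,\dots,\zcord^{k-1}$. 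Without this independence, $\Phi^j$ would in general have to involve those coordinates, violating the restricted form \eqref{eq:z_transf} required by later applications of the lemma. The degenerate subcases $i=j$ or $j<k$ can be handled separately by a trivial rewriting of the spanning set, requiring no coordinate change at all.
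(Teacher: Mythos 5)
Your proposal is correct and follows essentially the same route as the paper: first using involutivity to conclude that $\alpha$ is independent of $\zcord^{1},\dots,\zcord^{k-1}$, and then rectifying the vector field $\partial_{\zcord^{i}}+\alpha\,\partial_{\zcord^{j}}$ by a transformation touching only $\zcord^{j}$. The paper phrases the second step via the flow-box theorem (taking the initial condition $\zcord_{0}^{j}$ of the flow as the new coordinate), which is the same object as your first integral obtained by the method of characteristics, so the two arguments coincide in substance.
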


The proof can be found in the appendix. For two-input systems we will
encounter distributions $D_{k}$ of the form \eqref{eq:D_form} on
the manifold $\mathcal{X}\times\mathcal{U}$, and straighten them
out by state- or input transformations of the type \eqref{eq:z_transf}.
These transformations will also exhibit the structure-preserving form
\eqref{eq:delta_preserv_state_transf} with respect to the $\Delta$-sequence.

\section{Explicit Triangular Form\label{sec:Explicit-Triangular-Form}}

In \cite{NijmeijervanderSchaft:1990} it is shown how a static feedback
linearizable system can be transformed into Brunovsky normal form.
In the first step, a state transformation is performed that straightens
out the sequences of distributions simultaneously. This yields an
explicit triangular system representation which can be interpreted
as a composition of smaller subsystems. With respect to the inputs
of these subsystems, there may occur redundancies. Following \cite{NijmeijervanderSchaft:1990},
further state- and input transformations are successively performed
in order to obtain the Brunovsky normal form. The above mentioned
redundancies appear if the chains of the Brunovsky normal form have
different lengths.

For flat systems that are not static feedback linearizable, a transformation
to Brunovsky normal form is not possible. Thus, we introduce a more
general structurally flat explicit triangular form that can be obtained
by straightening out the $D$- and $\Delta$-sequences by suitable
state- and input transformations. For two-input systems, we prove
that such a transformation which straightens out both sequences of
distributions simultaneously always exists. Subsequently, similar
to the static feedback linearizable case, redundant inputs of the
subsystems can be eliminated by further structure-preserving state-
and input transformations. For the resulting system representation
we use the term explicit triangular normal form. It allows to read
off a flat output and the corresponding parameterizing map, according
to Definition \ref{def:endflat}, in a systematic way.

\subsection{Explicit Triangular Form for Multi-Input Systems}

In the following we present an explicit triangular representation
for flat systems. Note, we do not refer to it as a normal form, since
for systems with an arbitrary number of inputs the existence of such
coordinates is not guaranteed in general.
\begin{thm}
\label{thm:Triangular_Form}Assume there exists a state- and input
transformation
\begin{gather}
\begin{aligned}\hat{x} & =\Phi_{x}(x)\\
\hat{u} & =\Phi_{u}(x,u)
\end{aligned}
\label{eq:transf_to_triangular}
\end{gather}
that straightens out the sequences \eqref{eq:D_seq} and \eqref{eq:Delta_seq}
simultaneously, i.e.
\begin{equation}
\Delta_{j}=\Span\{\partial_{\hat{x}_{1}^{+}},\dots,\partial_{\hat{x}_{j}^{+}}\}\,,\hphantom{a}j=1,\dots,\bar{k}\label{eq:straight_Delta-1}
\end{equation}
with $\dim(\hat{x}_{j})=\dim(\Delta_{j})-\dim(\Delta_{j-1})$ and
\begin{equation}
D_{k}=\Span\{\partial_{z_{0}},\dots,\partial_{z_{k}}\}\,,\hphantom{a}k=0,\dots,\bar{k}-1\,.\label{eq:D_straightz}
\end{equation}
Here $z_{k}$ denotes a selection of components of $(\hat{u},\hat{x}_{1},\dots,\hat{x}_{k})$
with $\dim(z_{0})=\dim(D_{0})$ and $\dim(z_{k})=\dim(D_{k})-\dim(D_{k-1})$
for $k=1,\text{\ensuremath{\dots,\bar{k}-1}}$. In such coordinates,
the system \eqref{eq:sysEq-1} has the triangular form
\begin{gather}
\begin{aligned}\hat{x}_{\bar{k}}^{+} & =f_{\bar{k}}(\hat{x}_{\bar{k}},z_{\bar{k}-1})\\
\hat{x}_{\bar{k}-1}^{+} & =f_{\bar{k}-1}(\hat{x}_{\bar{k}},z_{\bar{k}-1},z_{\bar{k}-2})\\
 & \hphantom{a}\vdots\\
\hat{x}_{1}^{+} & =f_{1}(\hat{x}_{\text{\ensuremath{\bar{k}}}},z_{\bar{k}-1},\dots,z_{0})
\end{aligned}
\label{eq:Expicit_Triangular_I_form}
\end{gather}
with
\begin{equation}
\hat{x}_{k}\subset(z_{k},\dots,z_{\bar{k}-1})\,,\quad k=1,\dots,\bar{k}-1\label{eq:x_sub_z}
\end{equation}
and meets
\begin{equation}
\Rank(\partial_{z_{j-1}}f_{j})=\dim(\hat{x}_{j})\,,\quad j=1,\dots,\bar{k}\,.\label{eq:Rankcond}
\end{equation}
\end{thm}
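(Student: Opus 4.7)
The plan is to read off the system structure directly from the two pushforward identities $f_*(D_{j-1})=\Delta_j$ for $j=1,\dots,\bar{k}$ supplied by (\ref{eq:push_forward_Dk}), expressed in the assumed rectifying coordinates. As a preparation, Lemma~\ref{lem:EProj} gives $D_{\bar{k}-1}=E_{\bar{k}-1}=\Span\{\partial_{\hat{u}},\partial_{\hat{x}_1},\dots,\partial_{\hat{x}_{\bar{k}-1}}\}$, so the coordinates $(z_0,\dots,z_{\bar{k}-1})$ jointly exhaust $(\hat{u},\hat{x}_1,\dots,\hat{x}_{\bar{k}-1})$ as a set, and $\hat{x}_{\bar{k}}$ is the only coordinate absent from every $z_i$. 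For any generator $\partial_{z_i}$ of $D_i$ one then computes
\[
f_*(\partial_{z_i}) \;=\; \sum_{l=1}^{\bar{k}} (\partial_{z_i}f_l)\,\partial_{\hat{x}_l^+}.
\]
Because $f_*(D_i)=\Delta_{i+1}=\Span\{\partial_{\hat{x}_1^+},\dots,\partial_{\hat{x}_{i+1}^+}\}$, every component of $f_*(\partial_{z_i})$ lying outside $\Delta_{i+1}$ must vanish, so $\partial_{z_i}f_l=0$ whenever $l>i+1$. Reindexing, $f_l$ may depend on $z_i$ only for $i\geq l-1$, which, combined with the dependence on the isolated coordinate $\hat{x}_{\bar{k}}$, yields exactly (\ref{eq:Expicit_Triangular_I_form}).

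The containment (\ref{eq:x_sub_z}) is immediate from the hypothesis that each $z_j$ is a selection of components of $(\hat{u},\hat{x}_1,\dots,\hat{x}_j)$: if a component of $\hat{x}_k$ appears in $z_l$, then necessarily $l\geq k$; and every component of $\hat{x}_k$ for $k\leq\bar{k}-1$ must appear in some $z_l$ because the $z_l$'s exhaust $(\hat{u},\hat{x}_1,\dots,\hat{x}_{\bar{k}-1})$. For the rank condition (\ref{eq:Rankcond}), the nesting $D_{j-2}\subset D_{j-1}$ together with $f_*(D_{j-2})=\Delta_{j-1}$ and $f_*(D_{j-1})=\Delta_j$ induces a well-defined surjective linear map
\[
D_{j-1}/D_{j-2}\;\longrightarrow\;\Delta_j/\Delta_{j-1},
\]
which in the rectified coordinates is generated by $\partial_{z_{j-1}}\bmod D_{j-2}$ on the left and by $\partial_{\hat{x}_j^+}$ on the right. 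By the triangular dependence pattern just established, this induced map is represented by the Jacobian block $\partial_{z_{j-1}}f_j$, and surjectivity forces $\Rank(\partial_{z_{j-1}}f_j)=\dim(\hat{x}_j)$.

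The only genuine subtlety I anticipate is the bookkeeping between the two interpretations of the $z_l$'s: as labels for basis vector fields of the rectified $D_k$ on the one hand, and as a literal subset of the $(\hat{u},\hat{x})$-coordinates on the other, the latter being what makes the pushforward computation $f_*(\partial_{z_i})=\sum_l(\partial_{z_i}f_l)\,\partial_{\hat{x}_l^+}$ meaningful with ordinary partial derivatives. This identification is built into the hypothesis (\ref{eq:D_straightz}) and requires no extra argument, but it should be invoked explicitly so that the triangular dependence and the rank claim follow by the same linear reading of the pushforward equalities.
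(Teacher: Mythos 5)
Your proposal is correct and follows essentially the same route as the paper's proof: Lemma~\ref{lem:EProj} to show that the $z_l$ exhaust all coordinates except $\hat{x}_{\bar{k}}$, the pushforward identities $f_{*}(D_{k-1})=\Delta_{k}$ read componentwise to obtain the triangular dependence and the containment \eqref{eq:x_sub_z}, and a dimension count for \eqref{eq:Rankcond}. Your quotient-map phrasing of the rank argument is merely a restatement of the paper's observation that $\dim(\Delta_{j})-\dim(\Delta_{j-1})=\dim(\hat{x}_{j})$.
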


The proof can be found in the appendix. Hereinafter, we state the
intention of defining $z_{k}$. Since the $\Delta$-sequence is straightened
out, likewise is $E_{k}=\Span\{\partial_{\hat{x}_{k}},\dots,\partial_{\hat{x}_{1}},\partial_{\hat{u}}\}$.
Furthermore, by assumption the $D$-sequence is also straightened
out, and due to $D_{k}\subset E_{k}$ it follows that $D_{k}$ might
not contain all components of $\partial_{\hat{x}_{k}},\dots,\partial_{\hat{x}_{1}},\partial_{\hat{u}}$.
Therefore, we introduce the variable $z_{k}$, which acts as placeholder
and describes states and/or inputs of $(\hat{u},\hat{x}_{1},\dots,\hat{x}_{k})$
so that $D_{k}$ reads as \eqref{eq:D_straightz}. It is important
to mention that the variables $z_{0},\dots,z_{\bar{k}-1}$ contain
all inputs and states except $\hat{x}_{\bar{k}}$ (see the proof in
the appendix). Note, the system \eqref{eq:Expicit_Triangular_I_form}
is still in an explicit state representation \eqref{eq:sysEq-1},
as we can always replace $z_{k}$ by the corresponding states and
inputs. We clarify the definition of $z_{k}$ using the system of
Example \ref{exa:FlatCheck}.
\begin{example}
\label{exa:Consider-the-system}Consider the system \eqref{eq:example_1}
of Example \ref{exa:FlatCheck}. In this example, both the $D$- and
the $\Delta$-sequence are already straightened out. Thus, the coordinate
transformation of Theorem \ref{thm:Triangular_Form} is just a renaming
\[
\begin{aligned}\hat{x}_{3}^{1} & =x^{5} & \hat{x}_{2}^{1} & =x^{3} & \hat{x}_{1}^{1} & =x^{2} & \hat{u}^{1} & =u^{1}\\
\hat{x}_{3}^{2} & =x^{4} &  &  & \hat{x}_{1}^{2} & =x^{1} & \hat{u}^{2} & =u^{2}\,.
\end{aligned}
\]
According to Theorem \ref{thm:Triangular_Form} we define 
\begin{equation}
\begin{aligned}z_{0} & =(\hat{u}^{1},\hat{u}^{2})\,, & z_{1} & =(\hat{x}_{1}^{1})\,, & z_{2} & =(\hat{x}_{2}^{1},\hat{x}_{1}^{2})\end{aligned}
\label{eq:z_example1}
\end{equation}
such that the $D$-sequence of distributions reads as
\begin{align*}
D_{0} & =\Span\{\partial_{z_{0}}\}\\
D_{1} & =\Span\{\partial_{z_{0}},\partial_{z_{1}}\}\\
D_{2} & =\Span\{\partial_{z_{0}},\partial_{z_{1}},\partial_{z_{2}}\}
\end{align*}
and the system follows as
\[
\begin{aligned}x_{3}^{1,+} & =x_{3}^{2}+z_{2}^{2}+x_{3}^{1}\\
x_{3}^{2,+} & =z_{2}^{2}(x_{3}^{2}+1)+z_{2}^{1}\\
x_{2}^{1,+} & =z_{2}^{2}+z_{1}^{1}\\
x_{1}^{1,+} & =z_{0}^{1}\\
x_{1}^{2,+} & =z_{0}^{2}\,.
\end{aligned}
\]
The system has the structure of \eqref{eq:Expicit_Triangular_I_form},
and a flat output is given by $y=(x_{3}^{1},x_{3}^{2}).$
\end{example}

It is important to emphasize that for systems with $m>2$ inputs the
existence of a state- and input transformation \eqref{eq:transf_to_triangular}
that straightens out both sequences \eqref{eq:D_seq} and \eqref{eq:Delta_seq}
simultaneously is not guaranteed. Thus, an explicit triangular form
\eqref{eq:Expicit_Triangular_I_form} does not necessarily exist.
However, at least a transformation into an implicit triangular form
as discussed in \cite{KolarDiwoldSchoberl:2019} is always possible.

\subsection{Explicit Triangular Form for Two-Input Systems\label{subsec:Explicit-Triangular-Form}}

In the following we restrict ourselves to flat systems with two inputs
and state our main result.
\begin{thm}
\label{thm:two_input_always}A two-input flat system \eqref{eq:sysEq-1}
is locally transformable into an explicit triangular representation
\eqref{eq:Expicit_Triangular_I_form}.
\end{thm}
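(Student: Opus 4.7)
By Theorem \ref{thm:Triangular_Form}, it suffices to produce a state- and input transformation of the form \eqref{eq:transf_to_triangular} that rectifies the sequences \eqref{eq:D_seq} and \eqref{eq:Delta_seq} simultaneously, because the triangular shape \eqref{eq:Expicit_Triangular_I_form} is then an automatic consequence. My plan is to carry out the rectification in two phases: first straighten the $\Delta$-sequence by a single Frobenius-type state transformation, and then straighten the $D_k$'s one at a time by transformations of the structure-preserving form \eqref{eq:delta_preserv_state_transf}.

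For the first phase I would apply the state transformation \eqref{eq:state_transf_delta} already discussed in Section \ref{sec:useful_stuff}; in the resulting coordinates the $\Delta$-sequence takes the rectified form \eqref{eq:straight_Delta} and, as a by-product, the $E_k$'s are automatically straightened according to \eqref{eq:straight_E}. Thanks to Lemma \ref{lem:presFlat}, any subsequent transformation of the class \eqref{eq:delta_preserv_state_transf} leaves the rectified $\Delta$-structure intact, so the entire remaining work can be concentrated on the $D$-sequence.

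For the second phase I would induct on $k=0,1,\ldots,\bar{k}-1$, assuming inductively that $D_0,\ldots,D_{k-1}$ have already been brought into the form $D_j=\Span\{\partial_{z_0},\ldots,\partial_{z_j}\}$ with the $z_j$ drawn from the coordinates of $(\hat{u},\hat{x}_1,\ldots,\hat{x}_j)$. The two-input hypothesis enters decisively here: since $\dim(E_k)=\dim(\Delta_k)+m$ with $m=2$, and since $\dim(D_{k-1})\geq\dim(f_{*}(D_{k-1}))=\dim(\Delta_k)$, the codimension of $D_{k-1}$ in $E_k$ is at most two, so the involutive distribution $D_k$ can contribute at most two new generators over $D_{k-1}$. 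After absorbing any purely transversal generator into the coordinate list $z_k$ by a simple relabelling, the at most one remaining generator necessarily has the coupled shape $\partial_{\zeta^i}+\alpha(\zeta)\partial_{\zeta^j}$ of \eqref{eq:D_form}, and Lemma \ref{lem:An-involutive-distribution} supplies a single-coordinate transformation \eqref{eq:z_transf} that replaces it by $\partial_{\zeta^i}$. By choosing the transformed coordinate $\zeta^j$ to lie outside $D_{k-1}$ (the lemma gives this freedom, since $i$ and $j$ are symmetric up to redefining $\alpha$ via $1/\alpha$ when $\alpha\neq 0$), the resulting map has either the state- or the input-shape permitted by \eqref{eq:delta_preserv_state_transf} and depends only on coordinates of index $\geq k$ in the ordering compatible with $D_{k-1}$; hence $\Delta$-straightness is preserved by Lemma \ref{lem:presFlat}, and the straightness of $D_0,\ldots,D_{k-1}$ is left untouched. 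Iterating up to $k=\bar{k}-1$, and invoking Lemma \ref{lem:EProj} to guarantee that at the terminal step $D_{\bar{k}-1}=E_{\bar{k}-1}$ is automatically rectified, completes the simultaneous straightening, after which Theorem \ref{thm:Triangular_Form} delivers \eqref{eq:Expicit_Triangular_I_form}.

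The main obstacle is the structural claim underlying the inductive step: that for $m=2$ each $D_k$, relative to an already-straightened $D_{k-1}$, admits at most one genuinely coupled generator of the form \eqref{eq:D_form}. This is precisely where the two-input hypothesis is essential, since for $m\geq 3$ the codimension of $D_{k-1}$ in $E_k$ can exceed two and several independent couplings may coexist inside $D_k$ that cannot be rectified simultaneously by any map of the restricted class \eqref{eq:delta_preserv_state_transf}; this obstruction is exactly what prevents a general existence statement for an explicit triangular representation in the multi-input case.
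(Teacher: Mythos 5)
Your proposal is correct and follows essentially the same route as the paper: first rectify the $\Delta$-sequence via \eqref{eq:state_transf_delta}, then straighten the $D_k$ one step at a time using the two-input codimension bound $\dim(E_k)-\dim(D_{k-1})\leq 2$ (the paper's $\dim(z_{k-1,c})+\dim(\bar{x}_k)\leq 2$), Lemma \ref{lem:An-involutive-distribution} for the single coupled generator, Lemma \ref{lem:presFlat} for structure preservation, and Lemma \ref{lem:EProj} at the terminal step. The paper merely makes your inductive step explicit as the three-case Algorithm \ref{alg:theAlg}, including the observation (implicit in your normalization freedom) that maximality of $D_{k-1}$ forces the new generator to have a nonzero component along $\partial_{\bar{x}_k}$, so the coordinate that gets modified can always be taken to be $z_{k-1,c}$ or a component of $\bar{x}_k$, keeping the transformation within the class \eqref{eq:delta_preserv_state_transf}.
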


According to Theorem \ref{thm:Triangular_Form}, we must show that
there exists a state- and input transformation \eqref{eq:transf_to_triangular}
which straightens out the sequences of distributions \eqref{eq:D_seq}
and \eqref{eq:Delta_seq} simultaneously. We start with the system
representation \eqref{eq:sys_in_straight_Delta}, where the $\Delta$-sequence
has already been straightened out by a suitable state transformation.
Next, we want to straighten out the $D$-sequence step by step, starting
with $D_{0}$. For this purpose, we can exploit the fact that for
systems with two inputs the dimension of these distributions grows
in every step by either one or two. In the first case, the distribution
$D_{k}$ is of the form \eqref{eq:D_form} and can be straightened
out by a transformation \eqref{eq:z_transf}, whereas in the latter
case the distribution $D_{k}$ is already straightened out and no
transformation is required.

The following algorithm straightens out the $D$-sequence step by
step with state- and input transformations that preserve the structure
of the $\Delta$-sequence according to Lemma \ref{lem:presFlat}.
In every step $k$, after performing the transformation the corresponding
states or inputs are renamed by $z_{k}$ and $z_{k,c}$. As mentioned
before, $z_{k}$ is just a selection of states and inputs so that
\[
D_{k}=\Span\{\partial_{z_{0}},\dots,\partial_{z_{k}}\}\,,
\]
whereas $z_{k,c}$ denotes the complementary state or input so that
\[
E_{k}=\Span\{\partial_{z_{0}},\dots,\partial_{z_{k}},\partial_{z_{k,c}}\}\,.
\]
To keep the successive transformations readable, after each step $k$
we return from the hat notation for the transformed variables again
to the bar notation. However, for the final system representation
after the last step we use the hat notation.\begin{alg}\ \\
\label{alg:theAlg}
\textbf{Step $k=0$}, 
we distinguish between the two cases:
\begin{itemize}
\item[a)] If the entire input distribution is projectable, i.e. $D_{0}=E_{0}$, then there is no need for an input transformation because $E_{0}$ is already straightened out. We define $z_{0}=(u^{1},u^{2})$ and $z_{0,c}$ is empty.
\item[b)] If $D_{0}\neq E_{0}$, then 
\begin{equation}
D_{0}=\Span\{\alpha(\bar{x},u)\partial_{u^{1}}+\partial_{u^{2}}\}\,,
\notag
\end{equation}
up to a renumbering of the components of $u$. According to Lemma \ref{lem:An-involutive-distribution}, there exists an input transformation $\hat{u}^{1}=\Phi_{u^{1}}(\bar{x},u)$ such that $D_{0}=\Span\{\partial_{u^{2}}\}$. We define $z_{0}=u^{2}$ and $z_{0,c}=\hat{u}^{1}$.
\end{itemize}
Finally, the distributions are given by
\begin{align}
D_{0}&=\Span\{\partial_{z_{0}}\}\notag\\
E_{0}&=\Span\{\partial_{z_{0}},\partial_{z_{0,c}}\}\notag\,.
\end{align}

\textbf{Step $k=1,\dots,\bar{k}-1$}, we repeat the procedure with the distribution 
\begin{align*}
D_{k}\subset E_{k}=\Span\{\partial_{z_{0}},\dots,\partial_{z_{k-1}},\partial_{z_{k-1,c}},\partial_{\bar{x}_{k}}\}\,.
\end{align*}
It can be shown that the dimensions of $z_{k-1,c}$ and $\bar{x}_{k}$ meet 
\begin{align*}
\dim(z_{k-1,c})&\leq 1\\
\dim(\bar{x}_{k})&\geq 1\\
\dim(z_{k-1,c})+\dim(\bar{x}_{k})&\leq 2\,.
\end{align*}
Thus, in every step we must distinguish three cases:
\begin{itemize}

\item[a)] If the entire distribution $E_k$ is projectable, i.e. $D_k=E_k$, then there is no need for a transformation because $E_k$ is already straightened out. We define $z_{k}=(\bar{x}_{k},z_{k-1,c})$ and $z_{k,c}$ is empty.

\item[b)] If $D_{k}\neq E_{k}$ and $\dim(\bar{x}_{k})=2$, then $z_{k-1,c}$ is empty and
\begin{align*}
D_{k}=\Span\{\partial_{z_{0}},\dots,\partial_{z_{k-1}},\alpha(z,\bar{x})\partial_{\bar{x}_{k}^{1}}+\partial_{\bar{x}_{k}^{2}}\}\,,
\end{align*}
up to a renumbering of the components of $\bar{x}_{k}$. According to Lemma \ref{lem:An-involutive-distribution}, there exists a state transformation
\begin{align}
\hat{x}_{k}^{1}=\Phi_{\bar{x}_{k}^{1}}(\bar{x}_{k},\dots,\bar{x}_{\bar{k}})
\label{eq:state_transf_spec_1}
\end{align}
such that $D_{k}=\Span\{\text{\ensuremath{\partial_{z_{0}},\dots,\partial_{z_{k-1}},\partial_{\bar{x}_{k}^{2}}}}\}$. We define $z_{k}=\bar{x}_{k}^{2}$ and $z_{k,c}=\hat{x}_{k}^{1}$.

\item[c)] If $D_{k}\neq E_{k}$ and $\dim(\bar{x}_{k})=1$, then necessarily also $\dim(z_{k-1,c})=1$. Otherwise, we would have $D_k=E_k$ and case (a) would apply. Thus, 
\begin{align*}
D_{k}=\Span\{&\partial_{z_{0}},\dots,\partial_{z_{k-1}}, \\&\alpha(z,z_{k-1,c},\bar{x})\partial_{z_{k-1,c}}+\partial_{\bar{x}_{k}}\}\,.
\end{align*}
According to Lemma \ref{lem:An-involutive-distribution}, there exists a transformation
\begin{align}
\label{eq:state_inp_transf_spec_2}
\hat{z}_{k-1,c}=\Phi_{z_{k-1,c}}(\bar{x}_{k},\dots,\bar{x}_{\bar{k}},z_{k-1,c})
\end{align}
such that $D_{k}=\text{span}\{\partial_{z_{0}},\dots,\partial_{z_{k-1}},\partial_{\bar{x}_{k}}\}$. Since $z_{\bar{k}-1,c}$ could represent both an input or state variable of the system, the transformation is either an input- or a state transformation. We define $z_{k}=\bar{x}_{k}$ and $z_{k,c}=\hat{z}_{k-1,c}$.
\end{itemize}
Finally, the distributions are given by
\begin{align}
D_{k}&=\Span\{\partial_{z_{0}},\dots,\partial_{z_{k}}\}\notag\\
E_{k}&=\Span\{\partial_{z_{0}},\dots,\partial_{z_{k}},\partial_{z_{k,c}}\}\,.\notag
\end{align}
\end{alg}

Within the algorithm, only state transformations \eqref{eq:state_transf_spec_1}
and input- or state transformations \eqref{eq:state_inp_transf_spec_2}
are performed. Since both are of the structure-preserving form \eqref{eq:delta_preserv_state_transf},
the resulting entire transformation law is also of the form \eqref{eq:delta_preserv_state_transf}
and the $\Delta$-sequence remains straightened out. Thus, after the
last step of the algorithm, both sequences of distributions are straightened
out according to \eqref{eq:straight_Delta-1} and \eqref{eq:D_straightz}
in Theorem \ref{thm:Triangular_Form}.
\begin{rem}
In the last step $k=\bar{k}-1$, according to Lemma \ref{lem:EProj}
the distribution $E_{\bar{k}-1}$ is completely projectable and case
(a) applies. Consequently, the last distribution meets $D_{\bar{k}-1}=E_{\bar{k}-1}$
and $z_{\bar{k}-1,c}$ is empty. Thus, it is ensured that the variables
$z_{0},\dots,z_{\bar{k}-1}$ indeed contain all inputs and states
except $\hat{x}_{\bar{k}}$.

\end{rem}

\subsection{Explicit Triangular Normal Form for Two-Input Systems}

The explicit triangular form \eqref{eq:Expicit_Triangular_I_form}
consists of the $\bar{k}$ subsystems
\begin{gather}
\begin{aligned}\hat{x}_{\bar{k}}^{+} & =f_{\bar{k}}(\hat{x}_{\bar{k}},z_{\bar{k}-1})\\
\hat{x}_{\bar{k}-1}^{+} & =f_{\bar{k}-1}(\hat{x}_{\bar{k}},z_{\bar{k}-1},z_{\bar{k}-2})\\
 & \hphantom{a}\vdots\\
\hat{x}_{k}^{+} & =f_{k}(\hat{x}_{\bar{k}},z_{\bar{k}-1},\dots,z_{k-1})
\end{aligned}
\label{eq:Explicit_subsys}
\end{gather}
with $k=1,\dots,\bar{k}$. The parameterization of the system variables
of the system \eqref{eq:Expicit_Triangular_I_form} by the flat output
can be obtained by determining step by step the parameterization of
the system variables of the subsystems \eqref{eq:Explicit_subsys},
starting with the topmost subsystem
\begin{equation}
\hat{x}_{\bar{k}}^{+}=f_{\bar{k}}(\hat{x}_{\bar{k}},z_{\bar{k}-1})\,.\label{eq:topmost-1}
\end{equation}
If $\dim(\hat{x}_{k})=\dim(z_{k-1})$ for all $k=1,\dots,\bar{k}$,
then due to the rank conditions \eqref{eq:Rankcond} this is particularly
simple, and $y=\hat{x}_{\bar{k}}$ with $\dim(\hat{x}_{\bar{k}})=m$
is a flat output (see Example \ref{exa:Consider-the-system}). By
applying the implicit function theorem to the topmost subsystem \eqref{eq:topmost-1}
we immediately get the parameterization of the variables $z_{\bar{k}-1}$.
Next, since $z_{\bar{k}-1}$ contains the state variables $\hat{x}_{\bar{k}-1}$
(see \eqref{eq:x_sub_z}), by applying the implicit function theorem
to the equations
\[
\hat{x}_{\bar{k}-1}^{+}=f_{\bar{k}-1}(\hat{x}_{\bar{k}},z_{\bar{k}-1},z_{\bar{k}-2})
\]
we get the parameterization of the variables $z_{\bar{k}-2}$. Continuing
this procedure finally yields the parameterization of all system variables
by the flat output $y=\hat{x}_{\bar{k}}$. However, if $\dim(\hat{x}_{\bar{k}})<m$,
then for at least one $k\in\{2,\dots,\bar{k}\}$ we have $\dim(\hat{x}_{k})<\dim(z_{k-1})$,
which means that the equations 
\[
\hat{x}_{k}^{+}=f_{k}(\hat{x}_{\bar{k}},z_{\bar{k}-1},\dots,z_{k-1})
\]
cannot be solved for all components of $z_{k-1}$. In this case, the
subsystem \eqref{eq:Explicit_subsys} has redundant inputs, and in
addition to $\hat{x}_{\bar{k}}$ the flat output has further components.
The redundant inputs can be eliminated from the subsystem by suitable
coordinate transformations of the structure-preserving form \eqref{eq:delta_preserv_state_transf}.
The flat output of the complete system \eqref{eq:Expicit_Triangular_I_form}
consists of $\hat{x}_{\bar{k}}$ and the eliminated redundant inputs
of all subsystems.

For systems with two inputs there can occur exactly two cases. If
$\dim(\hat{x}_{\bar{k}})=2$, then $y=\hat{x}_{\bar{k}}$ is a flat
output and none of the subsystems has redundant inputs. Otherwise,
if $\dim(\hat{x}_{\bar{k}})=1$, then there is exactly one $k\in\{2,\dots,\bar{k}\}$
with $\dim(\hat{x}_{k})=1<\dim(z_{k-1})=2$. Thus, the corresponding
subsystem \eqref{eq:Explicit_subsys} has one redundant input. By
the use of the regular transformation
\begin{align}
\bar{z}_{k-1}^{2} & =f_{k}(\hat{x}_{\bar{k}},z_{\bar{k}-1},\dots,z_{k-1})\,,\label{eq:red_input_transf}
\end{align}
which is either an input- or state transformation but still of type
\eqref{eq:delta_preserv_state_transf}, the subsystem reads as 
\[
\begin{aligned}\hat{x}_{\bar{k}}^{+} & =f_{\bar{k}}(\hat{x}_{\bar{k}},z_{\bar{k}-1})\\
\hat{x}_{\bar{k}-1}^{+} & =f_{\bar{k}-1}(\hat{x}_{\bar{k}},z_{\bar{k}-1},z_{\bar{k}-2})\\
 & \hphantom{a}\vdots\\
\hat{x}_{k}^{+} & =\bar{z}_{k-1}^{2}
\end{aligned}
\]
and is independent of $\text{\ensuremath{z_{k-1}^{1}}}$. The flat
output of the complete system is given by $y=(\hat{x}_{\bar{k}},z_{k-1}^{1})$.
After the elimination of the possibly occurring redundant input of
the subsystem \eqref{eq:Explicit_subsys} by the coordinate transformation
\eqref{eq:red_input_transf}, we refer to the resulting system representation
as explicit triangular normal form for two-input systems.

\section{Example\label{sec:Example}}

In this section, we demonstrate our results with an example already
discussed in \cite{KolarDiwoldSchoberl:2019}. The system reads as
\begin{equation}
\begin{aligned}x^{1,+} & =\tfrac{x^{2}+x^{3}+3x^{4}}{u^{1}+2u^{2}+1}\\
x^{2,+} & =x^{1}(x^{3}+1)(u^{1}+2u^{2}-3)+x^{4}-3u^{2}\\
x^{3,+} & =u^{1}+2u^{2}\\
x^{4,+} & =x^{1}(x^{3}+1)+u^{2}\,,
\end{aligned}
\label{eq:example_automa}
\end{equation}
and the sequences of distributions \eqref{eq:D_seq} and \eqref{eq:Delta_seq}
are given by
\begin{align*}
D_{0} & =\Span\{-2\partial_{u^{1}}+\partial_{u^{2}}\}\subset E_{0}=\Span\{\partial_{u^{1}},\partial_{u^{2}}\}\\
D_{1} & =\Span\{\partial_{u^{1}},\partial_{u^{2}},-3\partial_{x^{2}}+\partial_{x^{4}}\}=E_{1}\\
D_{2} & =\Span\{\partial_{u^{1}},\partial_{u^{2}},-3\partial_{x^{2}}+\partial_{x^{4}},\tfrac{x^{1}}{x^{3}+1}\partial_{x^{1}}-\partial_{x^{3}},\\
 & \hphantom{aaaaaaaaaaaaaaaaaa}\tfrac{2x^{1}}{x^{3}+1}\partial_{x^{1}}-2\partial_{x^{3}}-\partial_{x^{4}}\}=E_{2}
\end{align*}
on $\mathcal{X}\times\mathcal{U}$ and 
\begin{align*}
\Delta_{1} & =\Span\{-3\partial_{x^{2,+}}+\partial_{x^{4,+}}\}\\
\Delta_{2} & =\Span\{-3\partial_{x^{2,+}}+\partial_{x^{4,+}},\tfrac{x^{1,+}}{x^{3,+}+1}\partial_{x^{1,+}}-\partial_{x^{3,+}},\\
 & \hphantom{aaaaaaaaaaaaaa}\tfrac{2x^{1,+}}{x^{3,+}+1}\partial_{x^{1,+}}-2\partial_{x^{3,+}}-\partial_{x^{4,+}}\}\\
\Delta_{3} & =\Span\{\partial_{x^{1,+}},\partial_{x^{2,+}},\partial_{x^{3,+}},\partial_{x^{4,+}}\}
\end{align*}
on $\mathcal{X}^{+}$. Following the procedure of Section \ref{sec:Explicit-Triangular-Form},
first we straighten out the $\Delta$-sequence by a state transformation
of the form \eqref{eq:state_transf_delta} with $\bar{x}_{1}=\bar{x}_{1}^{1}$,
$\bar{x}_{2}=(\bar{x}_{2}^{1},\bar{x}_{2}^{2})$ and $\bar{x}_{3}=\bar{x}_{3}^{1}$.
With the transformation
\[
\begin{aligned}\bar{x}_{3}^{1} & =x^{1}(x^{3}+1) & \bar{x}_{2}^{1} & =x^{2}+3x^{4} & \bar{x}_{1}^{1} & =x^{4}\,,\\
 &  & \bar{x}_{2}^{2} & =x^{3}
\end{aligned}
\]
the $\Delta$-sequence reads as
\begin{align*}
\Delta_{1} & =\Span\{\partial_{\bar{x}_{1}^{1,+}}\}\\
\Delta_{2} & =\Span\{\partial_{\bar{x}_{1}^{1,+}},\partial_{\bar{x}_{2}^{1,+}},\partial_{\bar{x}_{2}^{2,+}}\}\\
\Delta_{3} & =\Span\{\partial_{\bar{x}_{1}^{1,+}},\partial_{\bar{x}_{2}^{1,+}},\partial_{\bar{x}_{2}^{2,+}},\partial_{\bar{x}_{3}^{1,+}}\}\,,
\end{align*}
and the system in new coordinates is given by
\[
\begin{aligned}\bar{x}_{3}^{1,+} & =\bar{x}_{2}^{1}+\bar{x}_{2}^{2}\\
\bar{x}_{2}^{1,+} & =\bar{x}_{1}^{1}+\bar{x}_{3}^{1}(u^{1}+2u^{2})\\
\bar{x}_{2}^{2,+} & =u^{1}+2u^{2}\\
\bar{x}_{1}^{1,+} & =\bar{x}_{3}^{1}+u^{2}\,.
\end{aligned}
\]
The $D$-sequence in new coordinates reads as
\begin{align*}
D_{0} & =\Span\{-2\partial_{u^{1}}+\partial_{u^{2}}\}\subset E_{0}\\
D_{1} & =\Span\{\partial_{u^{1}},\partial_{u^{2}},\partial_{\bar{x}_{1}^{1}}\}=E_{1}\\
D_{2} & =\Span\{\partial_{u^{1}},\partial_{u^{2}},\partial_{\bar{x}_{1}^{1}},\partial_{\bar{x}_{2}^{1}},\partial_{\bar{x}_{2}^{2}}\}=E_{2}\,.
\end{align*}
Next, we use Algorithm \ref{alg:theAlg} in order to straighten out
the $D$-sequence and transform the system into the explicit triangular
representation \eqref{eq:Expicit_Triangular_I_form}. Due to the fact
that $E_{0}$ is not completely projectable, the case (b) applies
and we need to perform an input transformation
\[
\hat{u}^{1}=u^{1}+2u^{2}
\]
which yields $D_{0}=\Span\{\partial_{u^{2}}\}$. We define $z_{0}=u^{2}$
, $z_{0,c}=\hat{u}^{1}$ and the distribution reads as
\[
D_{0}=\Span\{\partial_{z_{0}}\}\,.
\]
In the second step, due to the fact that $E_{1}$ is completely projectable,
case (a) applies. We just define $z_{1}=(\bar{x}_{1}^{1},z_{0,c})=(\bar{x}_{1}^{1},\hat{u}^{1})$,
and the distribution $D_{1}$ reads as
\[
D_{1}=\Span\{\partial_{z_{0}},\partial_{z_{1}}\}\,.
\]
 Similarly, the last distribution $E_{2}$ is also completely projectable
(cf. Lemma \ref{lem:EProj}) and thus we have again case (a). We just
define $z_{2}=(\bar{x}_{2}^{1},\bar{x}_{2}^{2})$, and the distribution
$D_{2}$ reads as
\[
D_{2}=\Span\{\partial_{z_{0}},\partial_{z_{1}},\partial_{z_{2}}\}\,.
\]
Consequently, with 
\[
\begin{aligned}z_{0} & =(u^{2})\,, & z_{1} & =(\bar{x}_{1}^{1},\hat{u}^{1})\,, & z_{2} & =(\bar{x}_{2}^{1},\bar{x}_{2}^{2})\end{aligned}
\]
the system has the structure of \eqref{eq:Expicit_Triangular_I_form}
and reads as
\begin{equation}
\begin{aligned}\hat{x}_{3}^{1,+} & =z_{2}^{1}+z_{2}^{2}\\
\hat{x}_{2}^{1,+} & =\hat{x}_{3}^{1}z_{1}^{2}+z_{1}^{1}\\
\hat{x}_{2}^{2,+} & =z_{1}^{2}\\
\hat{x}_{1}^{1,+} & =\hat{x}_{3}^{1}+z_{0}^{1}\,.
\end{aligned}
\label{eq:ex_z}
\end{equation}
As mentioned before, the subsystems of \eqref{eq:ex_z} may still
have redundant inputs. Indeed, because of $\dim(\hat{x}_{3})<\dim(z_{2})$,
the inputs $z_{2}^{1}$ and $z_{2}^{2}$ of the topmost subsystem
\[
\hat{x}_{3}^{1,+}=z_{2}^{1}+z_{2}^{2}
\]
are redundant. This redundancy can be eliminated by the final transformation
\begin{equation}
\bar{z}_{2}^{2}=z_{2}^{1}+z_{2}^{2}\,.\label{eq:z_state}
\end{equation}
Since $z_{2}^{2}$ represents a state variable, the equation \eqref{eq:z_state}
defines a state transformation and can be rewritten as 
\[
\bar{\hat{x}}_{2}^{2}=\hat{x}_{2}^{1}+\hat{x}_{2}^{2}\,.
\]
Collecting all transformations we performed so far, we obtain the
complete transformation
\begin{equation}
\begin{aligned}\hat{x}_{3}^{1} & =x^{1}(x^{3}+1) & \hat{x}_{1}^{1} & =x^{4}\\
\hat{x}_{2}^{1} & =x^{2}+3x^{4} & \hat{u}^{1} & =u^{1}+2u^{2}\\
\bar{\hat{x}}_{2}^{2} & =x^{3}+x^{2}+3x^{4} & u^{2} & =u^{2}\,,
\end{aligned}
\label{eq:example_1_block-1}
\end{equation}
which transforms the system \eqref{eq:example_automa} into the explicit
triangular normal form
\[
\begin{aligned}\hat{x}_{3}^{1,+} & =\bar{\hat{x}}_{2}^{2}\\
\hat{x}_{2}^{1,+} & =\hat{u}^{1}\hat{x}_{3}^{1}+\hat{x}_{1}^{1}\\
\bar{\hat{x}}_{2}^{2,+} & =\hat{u}^{1}\hat{x}_{3}^{1}+\hat{u}^{1}+\hat{x}_{1}^{1}\\
\hat{x}_{1}^{1,+} & =\hat{x}_{3}^{1}+u^{2}
\end{aligned}
\]
with the flat output $y=(\hat{x}_{3}^{1},\hat{x}_{2}^{1}).$ The parameterizing
map can now be constructed in a systematic way. From the first equation
we immediately get the parameterization of $\bar{\hat{x}}_{2}^{2}$.
Inserting this parameterization into the second and the third equation
yields the parameterization of $\hat{x}_{1}^{1}$ and $\hat{u}^{1}$.
Finally, inserting the parameterization of $\hat{x}_{1}^{1}$ into
the last equation yields the parameterization of $u^{2}$. With the
inverse of \eqref{eq:example_1_block-1}, the parameterization of
the original system variables $x$ and $u$ follows.

\section{Conclusion}

We have shown that every flat nonlinear discrete-time system with
two inputs can be transformed into a structurally flat explicit triangular
normal form. In contrast to the implicit triangular form discussed
in \cite{KolarDiwoldSchoberl:2019}, this normal form is a state representation.
The transformation is based on the sequences of distributions \eqref{eq:D_seq}
and \eqref{eq:Delta_seq}, that arise in the test for flatness introduced
in \cite{KolarDiwoldSchoberl:2019}. If it is possible to straighten
out both sequences of distributions simultaneously by state- and input
transformations, then the transformed system has the triangular structure
\eqref{eq:Expicit_Triangular_I_form}. For static feedback linearizable
systems, even in the multi-input case with $m>2$, this can always
be achieved by a state transformation. For flat systems that are not
static feedback linearizable, in contrast, there is no guarantee that
both sequences can be straightened out simultaneously, even if additionally
input transformations are permitted. However, for flat systems with
two inputs, straightening out \eqref{eq:D_seq} and \eqref{eq:Delta_seq}
by state- and input transformations is always possible. Thus, every
flat discrete-time system with two inputs can be transformed into
an explicit triangular form.

It is important to emphasize that for flat continuous-time systems
no comparable result exists. An obvious reason is that the explicit
triangular form allows to read off a flat output which depends only
on the state variables. In contrast to continuous-time systems, it
is shown in \cite{KolarSchoberlDiwold:2019} that for flat discrete-time
systems such a flat output always exists.
\appendix

\subsection{Proof of Lemma \ref{lem:An-involutive-distribution}}

Due to the involutivity, all pairwise Lie Brackets must be contained
in $D$, i.e. 
\[
[\partial_{\zcord^{l}},\partial_{\zcord^{i}}+\alpha(\zcord)\partial_{\zcord^{j}}]\subset D\,,\hphantom{aa}l=1,\dots,k-1\,.
\]
Because of the special structure of the basis of $D$, this implies
that all pairwise Lie brackets vanish identically. Consequently, the
coefficient $\alpha$ meets $\partial_{\zcord^{l}}\alpha=0$ for $l=1,\dots,k-1$,
i.e., $\alpha$ is independent of $\zcord^{1},\dots,\zcord^{k-1}$.
Next, the flow $\phi_{t}(\zcord_{0})$ of the vector field $\partial_{\zcord^{i}}+\alpha(\zcord^{k},\dots,\zcord^{n})\partial_{\zcord^{j}}$
is of the form
\begin{align*}
\zcord^{i}(t,\zcord_{0}) & =t+\zcord_{0}^{i}\\
\zcord^{j}(t,\zcord_{0}) & =\phi_{t}^{j}(\zcord_{0}^{k},\dots,\zcord_{0}^{n})\,,
\end{align*}
i.e., it only affects the coordinates $\zcord^{i}$ and $\zcord^{j}$.
According to the flow-box theorem, by setting $t=\zcord^{i}$, $\zcord_{0}^{i}=0$,
$\zcord_{0}^{j}=\hat{\zcord}^{j}$ and $\zcord_{0}^{l}=\zcord^{l}$
for $l=k,\dots,n$ with $l\neq i,j$ on the right hand side, we obtain
a coordinate transformation which transforms the above vector field
into the form $\partial_{\zcord^{i}}$. In fact, only $\zcord^{j}$
is replaced by the transformed coordinate $\hat{\zcord}^{j}$, and
all other coordinates remain unchanged. In these coordinates, the
distribution $D$ reads as \eqref{eq:straight_Z}. The inverse coordinate
transformation is of the form \eqref{eq:z_transf}.

\subsection{Proof of Theorem \ref{thm:Triangular_Form}}

First, we show that the variables $z_{0},\dots,z_{\bar{k}-1}$ contain
all inputs and states except $\hat{x}_{\bar{k}}$. Since the distributions
\eqref{eq:straight_Delta} are straightened out, according to \eqref{eq:straight_E}
the distribution $E_{\bar{k}-1}$ reads as 
\[
E_{\bar{k}-1}=\Span\{\partial_{u},\partial_{\hat{x}_{1}},\dots,\partial_{\hat{x}_{\bar{k}-1}}\}\,.
\]
Lemma \ref{lem:EProj} guarantees that $E_{\bar{k}-1}$ is completely
projectable, and thus it coincides with the distribution 
\[
D_{\bar{k}-1}=\Span\{\partial_{z_{0}},\dots,\partial_{z_{\bar{k}-1}}\}\,.
\]
Thus, the variables $z_{0},\dots,z_{\bar{k}-1}$ contain all inputs
and states except $\hat{x}_{\bar{k}}$. The property \eqref{eq:x_sub_z}
is a consequence of $D_{k-1}\subset E_{k-1}$, i.e. 
\begin{equation}
\Span\{\partial_{z_{0}},\dots,\partial_{z_{k-1}}\}\subset\Span\{\partial_{u},\partial_{\hat{x}_{1}},\dots,\partial_{\hat{x}_{k-1}}\}\label{eq:D_kqer_1=00003DE_kqer_1}
\end{equation}
and $D_{\bar{k}-1}=E_{\bar{k}-1}$, i.e.
\begin{equation}
\Span\{\partial_{z_{0}},\dots,\partial_{z_{\bar{k}-1}}\}=\Span\{\partial_{u},\partial_{\hat{x}_{1}},\dots,\partial_{\hat{x}_{\bar{k}-1}}\}\,.\label{eq:D=00003DE}
\end{equation}
Because of \eqref{eq:D_kqer_1=00003DE_kqer_1}, the variables $\hat{x}_{k}$
cannot be contained in $(z_{0},\dots,z_{k-1})$. However, according
to \eqref{eq:D=00003DE}, they must be contained in $(z_{k},\dots,z_{\bar{k}-1})$.

The triangular structure of \eqref{eq:Expicit_Triangular_I_form}
is a consequence of 
\begin{equation}
f_{*}(D_{k-1})=\Delta_{k}=\Span\{\partial_{\hat{x}_{1}^{+}},\dots,\partial_{\hat{x}_{k}^{+}}\}\,,\hphantom{aa}k=1,\ldots,\bar{k}\,.\label{eq:f*D}
\end{equation}
For $k=0$, from \eqref{eq:f*D} and $D_{0}=\Span\{\partial_{z_{0}}\}$
we get $\partial_{z_{0}}f_{i}=0$ for $i=2,\dots,\bar{k}$, i.e.
\begin{align*}
\hat{x}_{\bar{k}}^{+} & =f_{\bar{k}}(\hat{x}_{\bar{k}},z_{\bar{k}-1},\dots,z_{1})\\
 & \vdots\\
\hat{x}_{2}^{+} & =f_{2}(\hat{x}_{\bar{k}},z_{\bar{k}-1},\dots,z_{1})\\
\hat{x}_{1}^{+} & =f_{1}(\hat{x}_{\text{\ensuremath{\bar{k}}}},z_{\bar{k}-1},\dots,z_{0})\,.
\end{align*}
Furthermore, because of $\dim(\Delta_{1})=\dim(\hat{x}_{1})$, the
rank condition $\text{rank}(\partial_{z_{0}}f_{1})=\dim(\hat{x}_{1})$
follows. Next, for $k=1$, from \eqref{eq:f*D} and $D_{1}=\Span\{\partial_{z_{0}},\partial_{z_{1}}\}$,
we get $\partial_{z_{1}}f_{i}=0$ for $i=3,\dots,\bar{k}$, i.e.
\begin{align*}
\hat{x}_{\bar{k}}^{+} & =f_{\bar{k}}(\hat{x}_{\bar{k}},z_{\bar{k}-1},\dots,z_{2})\\
 & \vdots\\
\hat{x}_{3}^{+} & =f_{3}(\hat{x}_{\bar{k}},z_{\bar{k}-1},\dots,z_{2})\\
\hat{x}_{2}^{+} & =f_{2}(\hat{x}_{\bar{k}},z_{\bar{k}-1},\dots,z_{1})\\
\hat{x}_{1}^{+} & =f_{1}(\hat{x}_{\text{\ensuremath{\bar{k}}}},z_{\bar{k}-1},\dots,z_{0})\,.
\end{align*}
Again, because of $\dim(\Delta_{2})=\dim(\hat{x}_{1})+\dim(\hat{x}_{2})$,
the rank condition $\text{rank}(\partial_{z_{1}}f_{2})=\dim(\hat{x}_{2})$
follows. Repeating this argumentation shows that the system has the
triangular structure \eqref{eq:Expicit_Triangular_I_form} and meets
the rank conditions \eqref{eq:Rankcond}.

\bibliographystyle{IEEEtran}
\bibliography{IEEEabrv,Bibliography_Johannes_April_2020}

\end{document}